\newtheorem{prethm}{{\bf Theorem}}
\newenvironment{theorem}{\begin{prethm}{\hspace{-0.5
               em}{\bf.}}}{\end{prethm}}
\newtheorem{prelem}{{\bf Lemma}}
\newenvironment{lem}{\begin{prelem}{\hspace{-0.5
               em}{\bf.}}}{\end{prelem}}  
\newtheorem{prelm}{{\bf Lemma}}
\newtheorem{prepro}{{\bf Proposition}}
\newtheorem{precor}{{\bf Corollary}}
\newenvironment{corollary}{\begin{precor}{\hspace{-0.5
               em}{\bf.}}}{\end{precor}}
\newtheorem{preobserv}{{\bf Observation}}
\newenvironment{observation}{\begin{preobserv}{\hspace{-0.5
               em}{\bf.}}}{\end{preobserv}}
\newtheorem{preconj}{{\bf Conjecture}}
\newtheorem{preremark}{{\bf Remark}}
\newtheorem{predef}{{\bf Definition}}
\newtheorem{preproof}{{\bf Proof.}}
\newenvironment{proof}[1]{\begin{preproof}{\rm
               #1}\hfill{$\Box$}}{\end{preproof}}
\newcommand{\sle}{{\rm SLEE}}
\renewcommand{\thefootnote}
\begin{document}
\vspace*{3cm}
\begin{center}
{\Large  Gutman index of the Mycielskian and its complement}

\vspace{5mm}
{\large  Mahdi Anbarloei$^*$, Ali Behtoei}

\vspace{4mm}
\baselineskip=0.20in
{\it Department of Mathematics, Imam Khomeini International University,} \\
{\it  P.O. Box: 34149-16818, Qazvin, Iran} \\
\footnote{* Corresponding author}
\footnote{e-mail: m.anbarloei@sci.ikiu.ac.ir,  a.behtoei@sci.ikiu.ac.ir}
\footnote{Keywords:  Gutman index,  Zagreb indices,  Mycielskian, Complement.}
\footnote{Mathematics Subject Classification: 05C12, 05C07, 05C76.}
\thispagestyle{empty}
\vspace{6mm}
\end{center}
\begin{abstract}
Let $G$ be a simple connected  graph. The Gutman index $Gut(G)$ of $G$
is defined as $\sum_{\{u,v\}\subseteq V(G)} d_G(u,v)\deg_G(u)\deg_G(v)$, where $\deg_G(u)$ is the degree of
vertex $u$ in $G$ and $d_G(u,v)$ is the distance between two vertices $u$ and $v$ in $G$.
In this paper, we study the Gutman index of Mycielskian of graphs.
Also, we determine exact value of the Gutman index of the complement of arbitrary Mycielskian graphs. 
\end{abstract}
\baselineskip=0.30in
\section{Introduction}

Throughout this paper we consider (non trivial) simple graphs, that are finite and undirected graphs without loops or multiple edges.
 Let $G=(V (G),E(G))$ be a connected graph of order $n=|V(G)|$ and of size $m=|E(G)|$.
The distance between two vertices $u$ and $v$ is denoted by  $d_G(u,v)$  which is the length of a shortest path between $u$ and $v$ in $G$.
The diameter of $G$ is $\max\{d_G(u,v):~u,v\in V(G)\}$. It is well known that almost all graphs have diameter two.
The degree of vertex $u$  is the number of edges adjacent to $u$ and is denoted by $\deg_G(u)$ .
A {\em chemical graph} is a graph whose vertices denote atoms and edges denote bonds between those atoms of any underlying chemical structure. 
A {\it topological index} for a (chemical) graph $G$ is a numerical quantity invariant under
automorphisms of $G$ and it does not depend on the labeling or pictorial
representation of the graph. 
Topological indices and graph invariants based on the distances between vertices of a graph or vertex degrees are widely used for
characterizing molecular graphs, establishing relationships between structure and properties of molecules, predicting
biological activity of chemical compounds, and making their chemical applications. 
The concept of topological index came from work done by Harold Wiener in 1947 while he was working on boiling point
of paraffin. 
 The {\em Wiener index} of $G$ is defined as
$W(G)=\sum_{\{u,v\}\subseteq V(G)} d_G(u,v)$, see \cite{JAMC2}.
Two important topological indices introduced about forty years ago 
by Ivan Gutman and Trinajsti$\acute{\mbox{c}}$  \cite{GutmanTrinajstic}  are the {\it first zagreb index} $M_1(G)$ and the {\it second zagreb index} $M_2(G)$ which are defined as below (see \cite{JAMC1}).
$$M_1(G)\!=\!\!\!\sum_{uv\in E(G)}\!\!(\deg_G(u)+\deg_G(v))\!=\!\!\!  \sum_{x\in V(G)}\!\!(\deg_G(x))^2,~M_2(G)\!=\!\!\!\sum_{uv\in E(G)}\!\!\!\deg_G(u)\deg_G(v).$$

The {\it degree distance} was introduced by Dobrynin and Kochetova \cite{Dobrynin} and Gutman \cite{Schultz-GutmanIndex} as a weighted version of the
Wiener index. The degree distance of G, denoted by DD(G), is defined as follows and it is computed for important families of graphs ( see\cite{MATCH-Degreedistance}  for instance):
$$DD(G)=\sum_{\{ u,v \} \subseteq V(G)} d_G(u,v)(\deg_G (u)+\deg_G(v)).$$

The {\em Gutman index} 
(another variant of the well known
and much studied Wiener Index) was introduced in 1994 by Gutman \cite{Schultz-GutmanIndex} as 
$$Gut(G)=\sum_{\{ u,v \} \subseteq V(G)} d_G(u,v)\deg_G (u)\deg_G(v).$$
For more results in this subject or related subjects see 
\cite{Ashrafi}  and \cite{MATCH-Indices}.      

For a graph $G=(V,E)$, the {\it Mycielskian} of G is the graph $\mu(G)$ (or simply, $\mu$) with the disjoint union $V\cup X\cup \{x\}$
as its vertex set and $E\cup \{v_ix_j:~v_iv_j\in E\}\cup \{xx_j:~1\leq j\leq n\}$ as its edge set, 
where $V=\{v_1,v_2,...,v_n\}$ and  $X=\{x_1,x_2,...,x_n\}$, see \cite{Mycielski}.
The Mycielskian and generalized Mycielskians have fascinated graph theorists a great deal. This has resulted
in studying several graph parameters of these graphs like Wiener index, domination number and Zagreb coindices (see \cite{Mycielski-Wiener}, \cite{Fisher} and \cite{Mycielski-Ashrafi}, respectively).
In this paper we study the Gutman index of the Mycielskian graphs. 

\section{Gutman index of the Mycielskian}

In order to determine the Gutman index of Mycielskian graphs, we need the following observations. 
From now on we will always assume that $G$ is a  connected graph, 
$$V(G)=\{v_1,v_2,...,v_n\},~X=\{ x_1,x_2,...,x_n\},~  V(G)\cap X=\emptyset,~ x\notin V(G)\cup X,$$
and $\mu$ is the Mycielskian of $G$, where
$$V(\mu)=V(G)\cup X\cup\{x\}, ~E(\mu)=E(G)\cup \{v_ix_j:~v_iv_j\in E(G)\}\cup\{xx_i:~1\leq i \leq n\}.$$
\begin{observation}  \label{MycielskiDegree}
 Let $\mu$ be the Mycielskian of $G$. Then for each $v\in V(\mu)$ we have
 \begin{eqnarray*}
 \deg_\mu (v)=
 \begin{cases}
 n & v=x  \\
 1+\deg_G(v_i) & v=x_i    \\
 2\deg_G(v_i) & v=v_i.
 \end{cases}
 \end{eqnarray*}
 \end{observation}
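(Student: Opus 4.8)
The plan is to prove the degree formula by a direct case analysis over the three types of vertices of $\mu$, in each case simply reading off the vertex's neighbourhood from the definition of the edge set $E(\mu)=E(G)\cup\{v_ix_j:~v_iv_j\in E(G)\}\cup\{xx_i:~1\le i\le n\}$ and counting. There is nothing to iterate or estimate here; the whole content is a careful bookkeeping of incidences.

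First I would handle the apex vertex $x$: no edge of $E(G)$ and no edge of the form $v_ix_j$ is incident with $x$, so the edges at $x$ are exactly $xx_1,\dots,xx_n$, giving $\deg_\mu(x)=n$. Next, for a shadow vertex $x_i\in X$: the set $X$ is independent in $\mu$, so the only neighbour of $x_i$ outside $V(G)$ is $x$, while its neighbours in $V(G)$ are precisely the $v_j$ with $v_iv_j\in E(G)$, i.e.\ the $G$-neighbours of $v_i$ (and $v_i$ itself is excluded since $G$ is simple, so $v_iv_i\notin E(G)$); this yields $\deg_\mu(x_i)=1+\deg_G(v_i)$. Finally, for an original vertex $v_i\in V(G)$: it is not adjacent to $x$, and its neighbours split into the $G$-neighbours $v_j$ (via edges of $E(G)$) and the vertices $x_j$ with $v_iv_j\in E(G)$ (via edges of the middle family); these two families are disjoint and each has size $\deg_G(v_i)$, so $\deg_\mu(v_i)=2\deg_G(v_i)$.

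The only real point of care — the "obstacle", such as it is — is the incidence bookkeeping: one must note that $X$ is an independent set, that $x$ is joined to all of $X$ but to no vertex of $V(G)$, and that $v_ix_i$ is \emph{not} an edge of $\mu$ because $v_iv_i$ is not an edge of the simple graph $G$. Once these three facts are recorded, no neighbour is missed or double-counted and the three formulas follow at once.
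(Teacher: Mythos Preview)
Your proof is correct; the case analysis directly from the definition of $E(\mu)$ is exactly the right (and essentially the only) way to verify this, and your bookkeeping remarks about $X$ being independent, $x$ having no neighbours in $V(G)$, and $v_ix_i\notin E(\mu)$ are precisely the points needed to avoid miscounts. The paper itself states this as an Observation without proof, so there is nothing to compare against --- your argument simply supplies the routine verification the paper leaves to the reader.
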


\begin{observation}  \label{MycielskiDistance}
In  the Mycielskian $\mu$ of $G$, the distance between two vertices $u,v\in V(\mu)$  are given as follows.
\begin{eqnarray*}
d_\mu(u,v)=
\left\{   \begin{array}{ll}
1 & u=x,~v=x_i \\
2 & u=x,~v=v_i \\
2 & u=x_i,~v=x_j \\
d_G(v_i,v_j) &  u=v_i,~v=v_j,~d_G(v_i,v_j)\leq 3 \\
4 & u=v_i,~v=v_j,~d_G(v_i,v_j)\geq 4 \\
2 & u=v_i,~v=x_j,~i=j \\
d_G(v_i,v_j) & u=v_i,~v=x_j,~i\neq j,~d_G(v_i,v_j)\leq 2 \\
3 & u=v_i,~v=x_j,~i\neq j,~d_G(v_i,v_j)\geq 3.
\end{array} \right.
\end{eqnarray*}
Specially, the diameter of the Mycielskian graph is at most four.
\end{observation}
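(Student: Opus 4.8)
The plan is to prove each of the eight lines of the table by a matching upper and lower bound on $d_\mu(u,v)$, and then to read off the diameter claim. Every short path written below exists because $G$ is connected and non-trivial, so that each $v_i$ has a neighbour in $G$. The single device behind all the lower bounds is the map $\rho\colon V(\mu)\setminus\{x\}\to V(G)$ given by $\rho(v_i)=\rho(x_i)=v_i$; I will first note that it sends edges to edges, since every edge of $\mu$ not incident with $x$ is either $v_iv_j$ or $v_ix_j$ with $v_iv_j\in E(G)$, and both are mapped to the edge $v_iv_j\in E(G)$. Hence any walk of $\mu$ avoiding $x$ projects under $\rho$ to a walk of $G$ of the same length, so an $x$-avoiding $u$--$w$ walk has length at least $d_G(\rho(u),\rho(w))$.

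Using this I dispose of the cases that touch $x$, together with the ``diagonal'' case. From $xx_i\in E(\mu)$ we get $d_\mu(x,x_i)=1$. Since $x$ has no neighbour in $V(G)$, $d_\mu(x,v_i)\ge 2$, with equality via $x - x_k - v_i$ for a neighbour $v_k$ of $v_i$. Since $X$ is independent in $\mu$, $d_\mu(x_i,x_j)\ge 2$ for $i\ne j$, with equality via $x_i - x - x_j$. And $v_iv_i\notin E(G)$ gives $v_ix_i\notin E(\mu)$, so $d_\mu(v_i,x_i)\ge 2$, with equality via $v_i - v_k - x_i$ (a path, since $v_kv_i\in E(G)$ forces $v_kx_i\in E(\mu)$).

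The heart of the matter is the pairs $\{v_i,v_j\}$ and $\{v_i,x_j\}$ with $i\ne j$, for which I will establish the compact form
$$d_\mu(v_i,v_j)=\min\{4,\,d_G(v_i,v_j)\}, \qquad d_\mu(v_i,x_j)=\min\{3,\,d_G(v_i,v_j)\};$$
split according to $d_G(v_i,v_j)\le 3$ versus $\ge 4$, respectively $d_G(v_i,v_j)\le 2$ versus $\ge 3$, these are exactly the four remaining lines of the table. For the upper bounds: a $G$-geodesic from $v_i$ to $v_j$ is also a path in $\mu$, so $d_\mu(v_i,v_j)\le d_G(v_i,v_j)$; and if $d_G(v_i,v_j)\ge 3$ then $v_i,v_j$ are non-adjacent with no common neighbour, so choosing neighbours $v_a$ of $v_i$ and $v_b$ of $v_j$ we have $a\ne b$ and $v_i - x_a - x - x_b - v_j$ is a path of length $4$. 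For $\{v_i,x_j\}$, a $G$-geodesic $v_i,\dots,v_k,v_j$ followed only as far as $v_k$ and then completed by the edge $v_kx_j\in E(\mu)$ gives $d_\mu(v_i,x_j)\le d_G(v_i,v_j)$ (degenerating to the single edge $v_ix_j$ when $d_G(v_i,v_j)=1$), while $v_i - x_a - x - x_j$ for a neighbour $v_a$ of $v_i$ gives $d_\mu(v_i,x_j)\le 3$ (and $a\ne j$ is automatic once $d_G(v_i,v_j)\ge 2$). For the lower bounds: a path of $\mu$ from $v_i$ to $v_j$ (resp. from $v_i$ to $x_j$) either avoids $x$, in which case $\rho$ makes its length at least $d_G(v_i,v_j)$, or it runs through $x$ and therefore reads $v_i,\dots,x_a,x,x_b,\dots,v_j$ (resp. $v_i,\dots,x_a,x,\dots,x_j$); in the first shape the central part $x_a,x,x_b$ contributes length $2$ and each of the two arms linking a $V$-vertex to an $X$-vertex contributes at least $1$, so the length is at least $4$, while in the second shape the arm from $v_i$ to $x_a$ contributes at least $1$ and the remainder $x_a,x,\dots,x_j$ contributes at least $2$, so the length is at least $3$. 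Hence every path has length at least $\min\{4,d_G(v_i,v_j)\}$, respectively $\min\{3,d_G(v_i,v_j)\}$, which together with the upper bounds gives the two formulas. Finally, every entry of the completed table lies in $\{1,2,3,4\}$, so $\operatorname{diam}(\mu)\le 4$.

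I expect the only delicate step to be the lower bound for paths through $x$: one has to notice that such a path is forced into the shape $\dots,x_a,x,x_b,\dots$ because $x$'s only neighbours lie in $X$, that each link between the $V$-part and the $X$-part costs at least one edge because $V(G)$ and $X$ are disjoint vertex classes, and --- the real point --- that no cheaper detour can exist, which is exactly what the homomorphism $\rho$ rules out for the $x$-avoiding alternative. Everything else amounts to exhibiting an explicit short path and noting a handful of non-adjacencies.
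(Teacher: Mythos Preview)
Your proof is correct. The paper itself states this result as an observation without proof, so there is no argument to compare against; your use of the retraction $\rho\colon V(\mu)\setminus\{x\}\to V(G)$ to handle all the lower bounds uniformly is a clean way to justify what the authors simply assert.
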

 
There are $|E(G)|$ unordered pairs of vertices in $V=V(G)$ whose distance is 1, and
$$\sum_{ \substack{ (u,v) \in V\times V \\  d_G(u,v)=1 }}  (\deg_G(u)+\deg_G(v))=
2\sum_{uv\in E(G) } (\deg_G(u)+\deg_G(v))=2M_1(G).$$
\begin{lem}   \label{SumDegreeSums}
Let $G$ be a graph of size $m$ whose vertex set is $V=\{v_1,v_2,...,v_n\}$. Then,
$$\sum_{\{v_i,v_j\}\subseteq V} (\deg_G(v_i)+\deg_G(v_j))=(n-1)2m.$$
\end{lem}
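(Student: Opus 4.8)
The plan is to prove this by a straightforward double-counting (interchange-of-summation) argument. The right-hand side already suggests the mechanism: $2m = \sum_{k=1}^n \deg_G(v_k)$ by the handshake lemma, so it suffices to show that the left-hand side equals $(n-1)\sum_{k=1}^n \deg_G(v_k)$. The idea is to track how many times each individual degree $\deg_G(v_k)$ is counted when we expand the sum over all unordered pairs.

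First I would observe that a fixed vertex $v_k$ lies in exactly $n-1$ unordered pairs $\{v_k,v_j\}$ with $v_j\in V\setminus\{v_k\}$. Hence, in the sum $\sum_{\{v_i,v_j\}\subseteq V}(\deg_G(v_i)+\deg_G(v_j))$, the term $\deg_G(v_k)$ contributes once for each such pair, i.e. $n-1$ times in total. Collecting contributions vertex by vertex gives
$$\sum_{\{v_i,v_j\}\subseteq V}(\deg_G(v_i)+\deg_G(v_j))=\sum_{k=1}^n (n-1)\deg_G(v_k)=(n-1)\sum_{k=1}^n \deg_G(v_k).$$
Then I would invoke the handshake lemma $\sum_{k=1}^n \deg_G(v_k)=2m$ to conclude that the expression equals $(n-1)2m$, as claimed. (An equivalent route is to pass to ordered pairs: $\sum_{i\neq j}(\deg_G(v_i)+\deg_G(v_j))=2(n-1)\sum_k\deg_G(v_k)$, exactly as in the displayed identity for distance-one pairs just before the lemma, and then divide by $2$.)

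There is no real obstacle here; the only point requiring a little care is the bookkeeping that distinguishes unordered pairs from ordered pairs and the verification that each vertex genuinely appears in $n-1$ (not $n$ or $2(n-1)$) unordered pairs. Everything else is the handshake lemma.
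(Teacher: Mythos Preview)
Your proof is correct and essentially identical to the paper's own argument: the paper also notes that each index $i$ lies in exactly $n-1$ unordered pairs $\{i,j\}\subseteq [n]$, rewrites the sum as $\sum_{i=1}^n (n-1)\deg_G(v_i)$, and then applies the handshake lemma to get $(n-1)2m$.
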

\begin{proof}{
For each $i\in [n]=\{1,2,...,n\}$, $\big{|}\{ \{i,j\}\subseteq [n]:~j\neq i\}\big{|}=n-1$. Therefore,
$$\sum_{\{i,j\}\subseteq [n]} (\deg_G(v_i)+\deg_G(v_j)) = \sum_{i=1}^n (n-1) \deg_G(v_i)=(n-1)2m.$$
}\end{proof}

\begin{lem}   \label{SumDegreeProducts}
Let $G$ be a graph of size $m$ whose vertex set is $V=\{v_1,v_2,...,v_n\}$. Then,
$$\sum_{\{v_i,v_j\}\subseteq V} \deg_G(v_i)\deg_G(v_j)=2m^2-{1\over 2}M_1(G).$$
\end{lem}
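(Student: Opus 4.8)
The plan is to expand the square of the sum of all degrees, namely $\left(\sum_{i=1}^{n}\deg_G(v_i)\right)^2$, and to separate the diagonal terms from the off-diagonal ones. Since $\sum_{i=1}^{n}\deg_G(v_i)=2m$ by the handshaking lemma, the left-hand side equals $4m^2$. On the other hand, expanding the square gives $\sum_{i=1}^{n}(\deg_G(v_i))^2$ from the diagonal plus twice the sum over unordered pairs $\{v_i,v_j\}$ with $i\neq j$ of $\deg_G(v_i)\deg_G(v_j)$. The diagonal sum is exactly $M_1(G)$ by the definition of the first Zagreb index recalled in the introduction.

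Putting these together, I would write
$$4m^2 = \left(\sum_{i=1}^{n}\deg_G(v_i)\right)^2 = \sum_{i=1}^{n}(\deg_G(v_i))^2 + 2\sum_{\{v_i,v_j\}\subseteq V}\deg_G(v_i)\deg_G(v_j) = M_1(G) + 2\sum_{\{v_i,v_j\}\subseteq V}\deg_G(v_i)\deg_G(v_j).$$
Solving this identity for the pair-sum yields $\sum_{\{v_i,v_j\}\subseteq V}\deg_G(v_i)\deg_G(v_j) = 2m^2 - \tfrac{1}{2}M_1(G)$, which is precisely the claimed formula.

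There is essentially no obstacle here: the argument is the standard "square of a sum" trick, and the only facts used are the handshaking lemma $\sum_v \deg_G(v)=2m$ and the identity $M_1(G)=\sum_{x\in V(G)}(\deg_G(x))^2$, both already stated in the excerpt. The one small point to be careful about is bookkeeping the factor of $2$ correctly when passing between the double sum over ordered pairs $(i,j)$ with $i\neq j$ and the sum over unordered pairs $\{v_i,v_j\}$; writing the expansion of the square explicitly as diagonal-plus-twice-off-diagonal makes this transparent. No case analysis, induction, or appeal to the earlier lemmas is needed.
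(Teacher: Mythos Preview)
Your proof is correct and is essentially identical to the paper's own argument: both expand $(2m)^2=\left(\sum_i \deg_G(v_i)\right)^2$ into the diagonal term $M_1(G)$ plus twice the desired pair-sum, and then solve for the latter.
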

\begin{proof}{
The sum of all vertex degrees equals twice the number of edges, hence
\begin{eqnarray*}
(2m)^2=(\sum_{i=1}^n\deg_G(v_i))^2 &=& 
\sum_{i=1}^n (\deg_G(v_i))^2 +2 \sum_{\{v_i,v_j\}\subseteq V} \deg_G(v_i)\deg_G(v_j)  \\
&=& M_1(G)+2 \sum_{\{v_i,v_j\}\subseteq V} \deg_G(v_i)\deg_G(v_j),
\end{eqnarray*}
which completes the proof.
}\end{proof}
 It is a well known fact that almost all graphs have diameter two. This means that graphs of diameter two play an important role in the theory of graphs and their applications.

\begin{theorem}  \label{GutIndex}
Let $G$ be an $n$-vertex graph of size $m$ whose diameter is 2.   If  $\mu$  is the Mycielskian of $G$, then the Gutman index of $\mu$ is given by
$$Gut(\mu)=8 ~\! Gut(G) + 3~\! M_1(G)  +2~\! DD(G) + 4m(m+1)+n(2n-1)+14mn.$$
\end{theorem}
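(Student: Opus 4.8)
The plan is to compute $Gut(\mu)=\sum_{\{u,v\}\subseteq V(\mu)}d_\mu(u,v)\deg_\mu(u)\deg_\mu(v)$ by partitioning the unordered pairs of $V(\mu)=V(G)\cup X\cup\{x\}$ into the natural blocks dictated by Observations~\ref{MycielskiDegree} and~\ref{MycielskiDistance}: (i) pairs $\{x,x_i\}$; (ii) pairs $\{x,v_i\}$; (iii) pairs $\{x_i,x_j\}$ with $i\neq j$; (iv) pairs $\{v_i,x_i\}$ (the ``diagonal'' mixed pairs); (v) pairs $\{v_i,x_j\}$ with $i\neq j$; and (vi) pairs $\{v_i,v_j\}$ with $i\neq j$. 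In each block I substitute the degree formula $\deg_\mu(v_i)=2\deg_G(v_i)$, $\deg_\mu(x_i)=1+\deg_G(v_i)$, $\deg_\mu(x)=n$, together with the corresponding distance from Observation~\ref{MycielskiDistance}, and reduce each resulting sum to the known invariants $Gut(G)$, $DD(G)$, $M_1(G)$, $m$, $n$ using Lemmas~\ref{SumDegreeSums} and~\ref{SumDegreeProducts} and the identity $\sum_i\deg_G(v_i)=2m$.

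The easy blocks are (i)--(iii): block (i) gives $n\sum_i(1+\deg_G(v_i))\cdot 1 = n(n+2m)$; block (ii) gives $2n\sum_i\deg_G(v_i)=4mn$ since each distance is $2$; block (iii) gives $2\sum_{\{i,j\}}(1+\deg_G(v_i))(1+\deg_G(v_j))$, which expands via Lemma~\ref{SumDegreeProducts} and Lemma~\ref{SumDegreeSums} into a polynomial in $m,n,M_1(G)$. Block (iv) contributes $2\sum_i 2\deg_G(v_i)(1+\deg_G(v_i)) = 4(2m)+4M_1(G)=8m+4M_1(G)$. The substantive blocks are (v) and (vi), where the diameter-two hypothesis on $G$ is essential: it forces every $d_G(v_i,v_j)\in\{1,2\}$, so that the case distinctions in Observation~\ref{MycielskiDistance} collapse — in block (vi) the value is always just $d_G(v_i,v_j)$ (the ``$\geq4$'' case never occurs), and in block (v) with $i\neq j$ we get exactly $d_G(v_i,v_j)\in\{1,2\}$ (the ``$\geq3$'' case never occurs). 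Thus block (vi) is $\sum_{\{i,j\}}d_G(v_i,v_j)\,4\deg_G(v_i)\deg_G(v_j)=4\,Gut(G)$.

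The main obstacle, and the place to be careful, is block (v): it is an ordered-style sum over $i\neq j$ of $d_G(v_i,v_j)(1+\deg_G(v_i))\cdot 2\deg_G(v_j)$, and because the summand is not symmetric in $i,j$ it is cleanest to sum over all ordered pairs and not divide by two. Expanding, $2\sum_{i\neq j}d_G(v_i,v_j)\deg_G(v_j) + 2\sum_{i\neq j}d_G(v_i,v_j)\deg_G(v_i)\deg_G(v_j)$; the second double sum is $2\cdot 2\,Gut(G)=4\,Gut(G)$ after recombining ordered into unordered pairs, and the first is $2\cdot DD(G)$ after observing $\sum_{i\neq j}d_G(v_i,v_j)(\deg_G(v_i)+\deg_G(v_j))=2\,DD(G)$. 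Here one must also handle the fact that when $d_G(v_i,v_j)=1$ the ``$i=j$ vs $i\neq j$'' bookkeeping must be consistent with block (iv); the diameter-two assumption guarantees no pair is missed. Finally I would collect the six contributions, $6\,Gut(G)$ coming from blocks (vi) and (v) combined ($4+2$), $DD(G)$ from block (v), $4M_1(G)$ from block (iv) plus the $M_1$ terms from block (iii), and the remaining purely $(m,n)$ polynomial from blocks (i), (ii), (iii), (iv), and then simplify to the stated closed form $Gut(\mu)=6\,Gut(G)+3M_1(G)+DD(G)+2(m+n)(2m+n)+n(6m-1)+6m$. The only real risk is an arithmetic slip in merging the low-order terms, so I would double-check the constant by testing on a small graph of diameter two, e.g. $G=P_3$ or $G=C_5$.
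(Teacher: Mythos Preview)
Your approach is essentially identical to the paper's: the same six-block partition of unordered pairs in $V(\mu)$, the same use of Observations~\ref{MycielskiDegree}--\ref{MycielskiDistance} to reduce distances and degrees, and the same reliance on Lemmas~\ref{SumDegreeSums}--\ref{SumDegreeProducts} to collapse the resulting sums into $Gut(G)$, $DD(G)$, $M_1(G)$, $m$, $n$. There is no new idea needed and nothing structurally missing.

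Two arithmetic slips worth flagging, exactly of the kind you anticipated. In block~(ii) you wrote $2n\sum_i\deg_G(v_i)=4mn$, but $\deg_\mu(v_i)=2\deg_G(v_i)$ together with $d_\mu(x,v_i)=2$ gives $4n\sum_i\deg_G(v_i)=8mn$ (this is the paper's Case~2 value). More importantly, your own computation for block~(v) yields $2\,DD(G)+4\,Gut(G)$ (first sum $=2\,DD(G)$, second sum $=4\,Gut(G)$, as you wrote), yet in the final collection you record only $DD(G)$ and $2\,Gut(G)$ from that block. When you carry the correct block values through, the $Gut(G)$ and $DD(G)$ coefficients you obtain will not match the displayed closed form; your suggested sanity check on $P_3$ will detect this and tell you which version is right.
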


\begin{proof}{
By the definition of Gutman index, we have
\begin{eqnarray*}
Gut(\mu)=\sum_{\{ u,v \} \subseteq V(\mu)} d_\mu(u,v)\deg_\mu (u)\deg_\mu(v).
\end{eqnarray*}
Regarding to the different possible cases which $u$ and $v$ can be choosen from the set $V(\mu)$, the following cases are  considered.
In what follows, the notations are as before and two observations  \ref{MycielskiDegree} and   \ref{MycielskiDistance} are applied for computing degrees and distances in  $\mu$. 
\\
{\bf Case 1.} $u=x$ and $v\in X$:
\begin{eqnarray*}
\sum_{i=1}^n d_\mu(x,x_i)\deg_\mu(x)\deg_\mu(x_i)  = \sum_{i=1}^n n(1+\deg_G(v_i))  =  n^2+2mn.
\end{eqnarray*}
{\bf Case 2.} $u=x$ and $v\in V(G)$:
\begin{eqnarray*}
\sum_{i=1}^n  d_\mu(x,v_i) \deg_\mu(x)\deg_\mu(v_i) = \sum_{i=1}^n 2n(2\deg_G(v_i)) = 8nm.
\end{eqnarray*}
{\bf Case 3.} $\{u,v\}\subseteq X$:
\\
Using Lemma \ref{SumDegreeSums} and Lemma \ref{SumDegreeProducts},  we have
\begin{eqnarray*}
\sum_{\{x_i,x_j\}\subseteq X} \!\!\!\! d_\mu(x_i,x_j)\deg_\mu(x_i)\deg_\mu(x_j)\!\!\! &=& \sum_{\{x_i,x_j\}\subseteq X} 2 (1+\deg_G(v_i))(1+\deg_G(v_j)) \\
&=& \sum_{\{i,j\}\subseteq [n]} 2 (1+\deg_G(v_i))(1+\deg_G(v_j)) \\
&=& \!\!2\!\!\!\sum_{\{i,j\}\subseteq [n]}\!\!\! \big{(}1 + (\deg_G(v_i)+\deg_G(v_j)) + \deg_G(v_i)\deg_G(v_j)\big{)}  \\
&=& 2 \left( \binom{n}{2} +  (n-1)2m +2m^2-{1\over 2}M_1(G) \right)  \\
&=& n(n-1)+4(n-1)m+  4m^2-M_1(G).
\end{eqnarray*}
{\bf Case 4.} $\{u,v\}\subseteq V(G)$:
\\
Since the diameter of $G$ is two,  Observation \ref{MycielskiDistance} implies that $d_\mu (v_i,v_j)=d_G(v_i,v_j)$. 
Hence,
\begin{eqnarray*}
\sum_{\{v_i,v_j\}\subseteq V(G)} d_\mu(v_i,v_j)\deg_\mu(v_i)\deg_\mu(v_j) 
&=&  \sum_{\{v_i,v_j\}\subseteq V(G)} d_G (v_i,v_j) (2\deg_G(v_i))(2\deg_G(v_j)) \\
&=& 4~ Gut(G).
\end{eqnarray*}
{\bf Case 5.} $u=v_i$ and $v=x_i$, $1\leq i\leq n$:
\begin{eqnarray*}
\sum_{i=1}^n d_\mu (v_i,x_i) \deg_\mu(v_i)\deg_\mu (x_i) &=& \sum_{i=1}^n 2 ( 2\deg_G(v_i))(1+\deg_G(v_i) ) \\
&=& 4 \sum_{i=1}^n\left(\deg_G(v_i)+(\deg_G(v_i))^2\right) \\
&=& 4(2m+M_1(G)).
\end{eqnarray*}
{\bf Case 6.} $u=v_i$ and $v=x_j$, $i\neq j$:
\begin{eqnarray*}
\sum_{\substack{\{v_i,x_j\} \subseteq V(\mu)  \\ i\neq j}} \!\!\!\!\!\!\!\!  d_\mu(v_i,x_j)\deg_\mu(v_i)\deg_\mu(x_j) \!\!\!\!
&=& \!\!\!\!\! \sum_{\substack{ \{v_i,x_j\} \subseteq V(\mu) \\ i\neq j}} d_\mu(v_i,x_j) (2\deg_G(v_i))(1+\deg_G(v_j) )  \\
&=&\!\!\! 2\!\!\!\!\!\!\!\! \sum_{\substack{ \{v_i,x_j\} \subseteq V(\mu) \\ i\neq j}} \!\!\!\!\!\!\! d_\mu(v_i,x_j)\deg_G(v_i) + d_\mu(v_i,x_j) \deg_G(v_i)\deg_G(v_j).
\end{eqnarray*}

Since $d_\mu(v_i,x_j)=d_\mu(v_j,x_i)$, $d_G(v_i,v_i)=0$, and using Observation  \ref{MycielskiDistance}, we see that
\begin{eqnarray*}
\sum_{\substack{ \{v_i,x_j\} \subseteq V(\mu) \\ i\neq j}} d_\mu(v_i,x_j) \deg_G(v_i) &=& 
\sum_{\substack{ \{v_i,x_j\} \subseteq V(\mu) \\ i\neq j}} d_G(v_i,v_j) \deg_G(v_i)  \\
 &=&\sum_{\substack{ \{v_i,x_j\} \subseteq V(\mu) }} d_G(v_i,v_j) \deg_G(v_i)  \\
 &=&\sum_{\{i,j\}\subseteq [n]} d_G(v_i,v_j) (\deg_G(v_i) +\deg_G(v_j)) \\
 &=& DD(G).
\end{eqnarray*}

Using similar arguments, it is straightforward  to see that
\begin{eqnarray*}
\sum_{\substack{ \{v_i,x_j\} \subseteq V(\mu) \\ i\neq j } } d_\mu(v_i,x_j)\deg_G(v_i)\deg_G(v_j)&=&
2\sum_{\{i,j\}\subseteq [n]}  d_\mu(v_i,x_j)\deg_G(v_i)\deg_G(v_j)  \\&=&
2\sum_{\{i,j\}\subseteq [n]}  d_G(v_i,v_j)\deg_G(v_i)\deg_G(v_j)  \\
&=&2~Gut(G).    
\end{eqnarray*}

Now the result follows through these six cases.
}\end{proof}

\begin{corollary}  \label{Gmpc}
For the complete bipartite graph $K_{n_1,n_2}$ (which has $n=n_1+n_2$ vertices and $m=n_1n_2$ edges) we have $Gut(\mu(K_{n_1,n_2}))=28m^2+2n^2+15mn-4m-n$, specially for each $n$-vertex star graph $S_n$ we have $Gut(\mu(S_n))=45n^2-76n+32$.
\end{corollary}


\section{Gutman index of  the complement of Mycielskian}

In order to determine the Gutman index of the complement of Mycielskian graphs, we need two following observations. 
\begin{observation}  \label{complementofMycielskiDegree}
 Let $\overline{\mu}$ be the complement of Mycielskian $\mu$ of $G$. Then, for each $v\in V(\overline{\mu})$ 
 \begin{eqnarray*}
 \deg_{\overline{\mu}} (v)=
 \begin{cases}
 n & v=x  \\
 2n-(1+\deg_G(v_i)) & v=x_i    \\
 2n-2\deg_G(v_i) & v=v_i.
 \end{cases}
 \end{eqnarray*}
 \end{observation}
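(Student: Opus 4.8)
The plan is to obtain this purely as a corollary of Observation \ref{MycielskiDegree}, via the standard relation between a graph and its complement. First I would count the vertices of the Mycielskian: since $V(\mu)=V(G)\cup X\cup\{x\}$ is a disjoint union with $|V(G)|=|X|=n$, we have $|V(\mu)|=2n+1$. In any simple graph $H$ on $N$ vertices the complement $\overline H$ has $\deg_{\overline H}(v)=(N-1)-\deg_H(v)$ for every vertex $v$, because the $N-1$ potential neighbours of $v$ are partitioned into its neighbours in $H$ and its neighbours in $\overline H$. Applying this with $H=\mu$ and $N=2n+1$ gives $\deg_{\overline\mu}(v)=2n-\deg_\mu(v)$ for each $v\in V(\overline\mu)=V(\mu)$.

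Next I would simply substitute the three cases of Observation \ref{MycielskiDegree} into this identity: for $v=x$ one gets $2n-n=n$; for $v=x_i$ one gets $2n-(1+\deg_G(v_i))$; and for $v=v_i$ one gets $2n-2\deg_G(v_i)$. These are exactly the three branches in the statement, so the claim follows.

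There is essentially no obstacle here; the only point deserving (minimal) care is the vertex count — one must remember to include the apex vertex $x$, so that $|V(\mu)|=2n+1$ and the complement degrees are taken with respect to $2n$ rather than $2n-1$. Everything else is routine arithmetic.
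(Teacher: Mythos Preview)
Your argument is correct: the vertex count $|V(\mu)|=2n+1$ together with the identity $\deg_{\overline H}(v)=(N-1)-\deg_H(v)$ and Observation~\ref{MycielskiDegree} immediately yields all three cases. The paper states this result as an observation without proof, and your derivation is exactly the natural justification one would supply.
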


\begin{observation}  \label{DistanceMycielskiComp}
In  the complement of Mycielskian $\mu$ of $G$, the distance between two vertices $u,v\in V(\overline{\mu})$  are given as follows.
\begin{eqnarray*}
d_{\overline{\mu}}(u,v)=
\left\{   \begin{array}{ll}
2& u=x,~v=x_i \\
1 & u=x,~v=v_i \\
1 & u=x_i,~v=x_j \\
1 &  u=v_i,~v=v_j,~d_G(v_i,v_j)>1 \\
2 & u=v_i,~v=v_j,~d_G(v_i,v_j)= 1 \\
1 & u=v_i,~v=x_j,~i=j \\
1 & u=v_i,~v=x_j,~i\neq j,~d_G(v_i,v_j)>1\\
2 & u=v_i,~v=x_j,~i\neq j,~d_G(v_i,v_j)=1.
\end{array} \right.
\end{eqnarray*}
Specially, the diameter of $\overline{\mu}$ is exactly 2.
\end{observation}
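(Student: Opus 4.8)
The plan is to read the adjacency relation of $\overline{\mu}$ straight off the definition of $\mu$, and then dispose of the distance table case by case: a listed value $1$ merely records an edge of $\overline{\mu}$, while a listed value $2$ records a non‑edge together with an explicit common neighbour.

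First I would list the edges of $\overline{\mu}$. From the definition, $x$ is adjacent in $\mu$ precisely to $x_1,\dots,x_n$; the set $X$ is independent in $\mu$; and $v_ix_j\in E(\mu)$ if and only if $v_iv_j\in E(G)$, so in particular $v_ix_i\notin E(\mu)$ since $G$ is loopless. Complementing, in $\overline{\mu}$ one has $x\sim v_i$ for all $i$, $x\not\sim x_i$ for all $i$, $x_i\sim x_j$ for all $i\neq j$, $v_i\sim v_j$ exactly when $v_iv_j\notin E(G)$, and for $i\neq j$, $v_i\sim x_j$ exactly when $v_iv_j\notin E(G)$, while $v_i\sim x_i$ always. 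Since $G$ is connected, $d_G$ is finite, so ``$v_iv_j\notin E(G)$'' is the same as ``$d_G(v_i,v_j)>1$''; thus the dichotomy $d_G(v_i,v_j)=1$ versus $d_G(v_i,v_j)>1$ is exhaustive, and together with the partition $V(\overline{\mu})=V(G)\sqcup X\sqcup\{x\}$ this shows the eight cases of the table cover all pairs. It also already settles every case whose claimed value is $1$, and shows that in each remaining case $u$ and $v$ are non‑adjacent, so only the value $2$ is in question there.

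It then remains to produce a common neighbour in each ``distance $2$'' case: for $\{x,x_i\}$ take $v_i$ (note $x\sim v_i\sim x_i$); for $\{v_i,v_j\}$ with $d_G(v_i,v_j)=1$ take $x$ (which is adjacent to all of $V(G)$); for $\{v_i,x_j\}$ with $i\neq j$ and $d_G(v_i,v_j)=1$ take $x_i$ (since $v_i\sim x_i\sim x_j$). Hence each of those distances equals $2$. Finally, every entry of the table is at most $2$ and $d_{\overline{\mu}}(x,x_1)=2$, so $\overline{\mu}$ is connected with diameter exactly $2$. There is no genuine difficulty here; the only things to keep straight are the $i=j$ and $i\neq j$ subcases for the pair $\{v_i,x_j\}$, and the fact that connectedness of $G$ is exactly what makes ``$d_G(v_i,v_j)>1$'' the complement of ``$v_iv_j\in E(G)$''.
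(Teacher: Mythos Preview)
Your argument is correct and complete; the paper itself states this result as an observation without supplying any proof, so there is nothing to compare against. Your case-by-case check of adjacency in $\overline{\mu}$ followed by exhibiting an explicit common neighbour in each non-adjacent case is exactly the natural verification.
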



\begin{theorem}  \label{Gut complement}
Let $G$ be an $n$-vertex graph of size $m$ and let  $\overline{\mu}$  be the complement of the Mycielskian $\mu$ of $G$.  Then, the Gutman index of $\overline{\mu}$ is given by
$$Gut(\overline{\mu})=
8M_2(G)-(10n+{1\over 2})M_1(G)+  n \bigg{(}  2n^2(4n-1)+{n-1\over 2} \bigg{)}  +6mn(1-2n)+2m(9m-1).
$$
\end{theorem}
\begin{proof}{
By the definition, we have 
\begin{eqnarray*}
Gut(\overline{\mu})=\sum_{\{ u,v \} \subseteq V(\overline{\mu})} d_{\overline{\mu}}(u,v)\deg_{\overline{\mu}}(u)\deg_{\overline{\mu}}(v).
\end{eqnarray*}
We  consider the following cases.
For computing degrees and distances in $\overline{\mu}$ two observations  \ref{complementofMycielskiDegree} and   \ref{DistanceMycielskiComp} are applied.
\\
{\bf Case 1.} $u=x$ and $v\in X$:
\begin{eqnarray*}
\sum_{i=1}^n  d_{\overline{\mu}}(x,x_i)\deg_{\overline{\mu}}(x)\deg_{\overline{\mu}}(x_i) &=& \sum_{i=1}^n 2n(2n-1-\deg_G(v_i)) \\
&=& 2n((2n-1)n-2m).
\end{eqnarray*}
{\bf Case 2.} $u=x$ and $v\in V(G)$:
\begin{eqnarray*}
\sum_{i=1}^n  d_{\overline{\mu}}(x,v_i)\deg_{\overline{\mu}}(x)\deg_{\overline{\mu}}(v_i)&=& \sum_{i=1}^n n(2n-2\deg_G(v_i)) \\
&=& 2n~(n^2-2m).
\end{eqnarray*}
{\bf Case 3.} $\{u,v\}\subseteq X$:

Using Lemma \ref{SumDegreeSums}  and Lemma \ref{SumDegreeProducts}, we see that
\begin{eqnarray*}
\sum_{\{x_i,x_j\}\subseteq X} \!\! d_{\overline{\mu}}(x_i,x_j)\deg_{\overline{\mu}}(x_i)\deg_{\overline{\mu}}(x_j) 
\!\!\! &=& \sum_{\{x_i,x_j\}\subseteq X} (2n-1-\deg_G(v_i))~(2n-1-\deg_G(v_j)) \\
&=& \sum_{\{i,j\}\subseteq [n]} (2n-1-\deg_G(v_i))~(2n-1-\deg_G(v_j)) \\
&=& \sum_{\{i,j\}\subseteq [n]} \bigg{(}(2n-1)^2-(2n-1)(\deg_G(v_i)+\deg_G(v_j)) \\ 
&&  + \deg_G(v_i)\deg_G(v_j)\bigg{)}  \\
&=& \!\!\! \binom{n}{2} (2n-1)^2-(2n-1)(n-1)2m+2m^2-{1 \over 2}M_1(G).
\end{eqnarray*}
{\bf Case 4.} $\{u,v\}\subseteq V(G)$:

By Observation \ref{DistanceMycielskiComp}, $d_{\overline{\mu}}(v_i,v_j)$ is 1 whenever $v_iv_j\notin E(G)$ and is 2 otherwise.  Also,
$$\big{\{} \{v_i,v_j\}\subseteq V:~i\neq j,~v_iv_j\notin E(G)\big{\}}=\big{\{} \{v_i,v_j\}\subseteq V:~i\neq j\big{\}} \setminus \big{\{} \{v_i,v_j\}\subseteq V:~v_iv_j\in E(G)\big{\}}$$
where, $V=V(G)$. Thus,
\begin{eqnarray*}
\sum_{\{v_i,v_j\}\subseteq V(G)} d_{\overline{\mu}}(v_i,v_j)\deg_{\overline{\mu}}(v_i)\deg_{\overline{\mu}}(v_j)
\!\!\! &=& \sum_{v_iv_j\notin E(G)}1 (2n-2\deg_G(v_i))(2n-2\deg_G(v_j)) \\
 && + \sum_{v_iv_j\in E(G)}2(2n-2\deg_G(v_i))(2n-2\deg_G(v_j)) \\
 &=& \sum_{\{v_i,v_j\}\subseteq V(G)} (2n-2\deg_G(v_i))(2n-2\deg_G(v_j)) \\
 && + \sum_{v_iv_j\in E(G)} (2n-2\deg_G(v_i))~(2n-2\deg_G(v_j)) 
\end{eqnarray*}
Now, two lemmas \ref{SumDegreeSums}  and  \ref{SumDegreeProducts} imply that
\begin{eqnarray*}
\sum_{\{v_i,v_j\}\subseteq V(G)} \!\! (2n-2\deg_G(v_i))(2n-2\deg_G(v_j))  =
4n^2 \binom{n}{2} -4n(n-1)2m +4(2m^2-{1\over 2}M_1(G)).
\end{eqnarray*}
Also, the definitions of first and second Zagreb indices imply that
\begin{eqnarray*}
\sum_{v_iv_j\in E(G)} (2n-2\deg_G(v_i))(2n-2\deg_G(v_j)) =
 4n^2m-4n~M_1(G)+4~M_2(G).
\end{eqnarray*}
{\bf Case 5.} $u=v_i$ and $v=x_i$, $1\leq i\leq n$:
\begin{eqnarray*}
\sum_{i=1}^n d_{\overline{\mu}} (v_i,x_i) \deg_{\overline{\mu}}(v_i)\deg_{\overline{\mu}} (x_i) 
&=& \sum_{i=1}^n ( 2n-2\deg_G(v_i))(2n-1-\deg_G(v_i)) \\
&=& 2n^2(2n-1)-(6n-2)2m+2M_1(G).
\end{eqnarray*}
{\bf Case 6.} $u=v_i$ and $v=x_j$, $i\neq j$:
\\
By Observation \ref{DistanceMycielskiComp}, $d_{\overline{\mu}}(v_i,x_j)=d_{\overline{\mu}}(v_j,x_i)$  is 1 when $v_iv_j\notin E(G)$, otherwise is 2.  Also,
$$\big{\{} (v_i,v_j):~i\neq j,~v_iv_j\notin E(G)\big{\}} = \big{\{} (v_i,v_j):~i\neq j\big{\}} \setminus \big{\{} (v_i,v_j):~v_iv_j\in E(G)\big{\}}.$$
Thus,
\begin{eqnarray*}
\sum_{\substack{\{v_i,x_j\} \subseteq V(\overline{\mu})  \\ i\neq j}} \!\!\! d_{\overline{\mu}}(v_i,x_j)\deg_{\overline{\mu}}(v_i)\deg_{\overline{\mu}}(x_j)
\!\!\! &=& \sum_{\substack{(v_i,v_j) \\ v_iv_j\notin E(G)}}  1 (2n-2\deg_G(v_i))(2n-1-\deg_G(v_j)) \\
 && +\!\!\! \sum_{\substack{(v_i,v_j) \\ v_iv_j\in E(G)}}  2 (2n-2\deg_G(v_i))(2n-1-\deg_G(v_j)) \\
 &=& \sum_{\substack{(v_i,v_j)  \\ i\neq j}}  (2n-2\deg_G(v_i))(2n-1-\deg_G(v_j)) \\
 && + \!\! \sum_{\substack{(v_i,v_j) \\ v_iv_j\in E(G)}}  (2n-2\deg_G(v_i))(2n-1-\deg_G(v_j)) \\
\end{eqnarray*}
Each vertex $v_j$ can be paired with $n-1$ vertices $v_i$ as $(v_i,v_j)$, $i\neq j$.  
Hence
$\sum_{_{\substack{\\ (v_i,v_j)}}}\!\! \deg_G(v_j)\!= (n-1)\sum_{j=1}^n \deg_G(v_j)$ which is equal to $(n-1)2m$.
Also, note that $\sum_{_{\substack{\\ (v_i,v_j)}}} \deg_G(v_i)\deg_G(v_j)$ equals $2 \sum_{_{\{v_i,v_j\}}} \deg_G(v_i)\deg_G(v_j)$.
Now, since $\big{|}\{(v_i,v_j):~i\neq j\}\big{|}=n(n-1)$, we obtain
\begin{eqnarray*}
\sum_{\substack{(v_i,v_j)  \\ i\neq j}} (2n-2\deg_G(v_i))(2n-1-\deg_G(v_j)) \!\!\! &=& 
 2n(2n-1)n(n-1)  - 2n(n-1)2m \\
&& - 2(2n-1)(n-1)2m+4(2m^2-{1 \over 2}M_1(G)). 
\end{eqnarray*}
Note that $\big{|} \{ (v_i,v_j):~v_iv_j\in E(G) \} \big{|}=2m$ and $\sum_{_{\substack{(v_i,v_j) \\ v_iv_j\in E(G) }}} \deg_G(v_i)=\sum_{i=1}^n (\deg_G(v_i))^2$, because each vertex $v_i$ has $\deg_G(v_i)$ neighbours and appears  $\deg_G(v_i)$  times in the desired summation.
Thus, using Lemma \ref{SumDegreeProducts}, we see that
\begin{eqnarray*}
\sum_{\substack{(v_i,v_j)   \\ v_iv_j\in E(G)}} (2n-2\deg_G(v_i))(2n-1-\deg_G(v_j)) \!\!\! &=&
 2n(2n-1)2m - 2nM_1(G) \\
&& - 2(2n-1)M_1(G) + 4 M_2(G).
\end{eqnarray*}

Now the result follows through the cases 1 to 6.
}\end{proof}
Let  $P_n$ and  $C_n$ denote the path and the cycle on $n$ vertices, respectively, where $n \geq 3$. 
It is well known (see \cite{Ashrafi}) that
$M_1(P_n)= 4n-6$, $M_2(P_n)=4(n-2)$, $M_1(C_n)= 4n=M_2(C_n)$.
\begin{corollary}  \label{Gmpc}
The following statements hold.
\begin{itemize}
\item[i)] $Gut(\overline{\mu(P_n)})= 2n^3(4n-7)-{7\over 2}n(n-13)-41 $ for each $n\geq 3$.
\item[ii)]  $Gut(\overline{\mu(C_n)})= 2n^3(4n-7)-{1\over 2}n(31n-55) $ for each $n\geq 3$.
\item[iii)] $Gut(\overline{\mu(K_{n_1,n_2})})= 2m(13m-1)+ n \bigg{(}  2n^2(4n-1)+{n-1\over 2} \bigg{)} -mn(22n-{11\over2}) $ in which $n=n_1+n_2$ and  $m=n_1n_2$.
\item[iV)]  $Gut(\overline{\mu(S_n)})= 2n^3(4n-1)-2(n-1)(11n^2-16n+14) $ for each $n\geq 2$.
\item[V)]  $Gut(\overline{\mu(K_n)})= {1\over 2}n^3(n+11)+4n(n-1)$ for each $n\geq 2$.
\end{itemize}
\end{corollary}

For example we have $Gut(\overline{\mu(P_3)})=Gut(\overline{\mu(S_3)})=Gut(\overline{\mu(K_{1,2})})=334$ and $Gut(\overline{\mu(C_3)})=Gut(\overline{\mu(K_3)})=213$.


\begin{thebibliography}{99}





\bibitem{JAMC1}
Basavanagoud, B., Patil,  S., A note on hyper-Zagreb coindex of graph operations, {\it J. Appl. Math. Comput.}, {\bf 53} No.1, 647-655 (2017).

\bibitem{Dobrynin}
Dobrynin, A. A., Kochetova,  A. A. , Degree Distance of a Graph: A Degree Analogue of the Wiener Index, {\it J.
Chem. Inf. Comput. Sci.}, {\bf 34},  1082-1086 (1994).

\bibitem{Mycielski-Wiener}
Eliasi, M., Raeisi,  G., Taeri, B., Wiener index of some graph operations, {\it Discret. Appl. Math.}, {\bf 160},  1333-1344 (2012).

\bibitem{Fisher}
Fisher, D.C.,   McKena, P.A.,   Boyer, E.D.,   Hamiltonicity, diameter, domination, packing and biclique
partitions of Mycielski’s graphs, {\it Discret. Appl. Math.}, {\bf 84},  93-105 (1998).

\bibitem{Schultz-GutmanIndex}
 Gutman, I., Selected Properties of the Schultz Molecular Topological Index, {\it J. Chem. Inf. Comput. Sci.}, {\bf 34},   1087-1089 (1994).


\bibitem{GutmanTrinajstic}
 Gutman,  I.,  Trinajsti$\acute{\mbox{c}}$, N., Graph theory and molecular orbitals. Total $\pi$-electron energy of
alternant hydrocarbons, {\it Chem. Phys. Lett.}, {\bf 17},  535-538 (1972).


\bibitem{Mycielski-Ashrafi}
Hua, H.,   Ashrafi, A. R.,  Zhang, L., More on Zagreb coindices of graphs, {\it Filomat}, {\bf 26},  1215-1225 (2012).



\bibitem{Ashrafi}
 Khalifeh, M. H.,  Yousefi-Azari, H.,  Ashrafi, A. R.,  Wagner, S., Some new results on
distance-based graph invariants, {\it  Eur. J. Comb.}, {\bf 30},  1149-1163 (2009).

\bibitem{MATCH-Degreedistance}
 Ili$\acute{\mbox{c}}$, A.,  Klav$\check{\mbox{z}}$ar, S.,  Stevanovi$\acute{\mbox{c}}$, D., Calculating the degree distance of partial Hamming
graphs, {\it MATCH Commun. Math. Comput. Chem.}, {\bf 63},   411-424 (2010).

 
\bibitem{Mycielski}
 Mycielski, J.,    Sur le colouriage des graphes, {\it Colloq. Math.}, {\bf 3},   161-162 (1955).
 
 \bibitem{JAMC2}
  Tan, S.W.,  Lin, Y., The largest Wiener index of unicyclic graphs given girth or maximum degree, {\it J. Appl. Math. Comput.},  {\bf 53},  No.1, 343-363 (2017).



\bibitem{MATCH-Indices} 
  Xu, K.,  Liu, M.,  Das, K. C.,  Gutman, I., Furtula, B., A survey on graphs extremal with respect to distance-based
topological indices, {\it MATCH Commun. Math. Comput. Chem.}, {\bf 71},  461-508 (2014).


\end{thebibliography}
\end{document}